\title{Dispersive estimates for Klein-Gordon equations via a physical space approach}
\author{Willie Wong\Affiliation{Michigan State University, East Lansing, USA; \url{wongwwy@math.msu.edu}}}
\keywords{PDE, VectorFieldMethod}
\newcommand\dvol{\mathrm{dvol}}
\begin{document}
\maketitle

\begin{wwwabstract}
	Building on the hyperboloidal foliation approach of Lefloch and Ma, we extend Klainerman's physical-space approach to dispersive estimates to recover the frequency-restricted $L^1$--$L^\infty$ dispersive estimates for Klein-Gordon equations. The hyperboloidal foliation approach naturally only provide estimates within a fixed forward light-cone, and is based on an initial data norm that is not translation invariant. Both of these problems can be handled with frequency-dependent physical-space truncations. To handle the lack of scaling symmetry for the Klein-Gordon equation and complete the argument, we also need to keep track of the effectiveness of our estimates in the vanishing mass limit. 
\end{wwwabstract}

\section{Introduction}

Fundamental to the study of dispersive equations are the \emph{dispersive estimates} they enjoy, such estimates (for e.g.\ the Airy, Schr\"odinger, and wave equations) typically deriving from oscillatory integration control of the explicit Fourier representations of the corresponding fundamental solutions (see \cite{Tao2006} and references therein). 
An alternative physical-space method relying weighted energy estimates was first introduced by Klainerman \cite{Klaine2001} for the wave equation in dimension $d \geq 3$, and extended to the Schr\"odinger equation by the author \cite{Wong2018}. 
In this short note we present a physical-space method for studying the dispersive estimates enjoyed by the Klein-Gordon equation. 
Unlike the other model dispersive equations (Airy, Schr\"odinger, and wave), the Klein-Gordon equation does not enjoy scaling homogeneity. 
Its \emph{low frequency} components are known to behave more akin to solutions to the Schr\"odinger equation, while its \emph{high frequency} components behave more like waves, which complicates the study of its dispersive estimates; see \cite{NakSch2011} for a detailed discussion using the oscillatory integration method. 
Therefore it seems worthwhile to indicate how the same frequency-restricted results can be derived using a physical-space argument. 

Our approach will be a combination of Klainerman's physical space approach \cite{Klaine2001} for studying dispersive estimates of the wave equation, which is based on weighted energy estimates and the Klainerman-Sobolev inequalities adapted to constant time hypersurfaces, with global-Sobolev inequalities and energy estimates adapted to hyperboloidal foliations introduced in \cite{LefMa2015, Klaine1985a}. 
The weighted energy estimates used in the hyperboloidal foliation method are spatially inhomogeneous, and we will paste together frequency-adjusted cut-offs to localize the dependence on initial data.

\subsection{Acknowledgements} WWY Wong is supported by a Collaboration Grant from the Simons Foundation, \#585199.

\section{Preliminaries}

For convenience, all functions are assumed to be smooth. 

Throughout we will denote by $\Box_m$, for $m \geq 0$, the differential operator
\begin{equation}
	\Box_m \eqdef -\partial^2_{tt} + \underbrace{\sum_{i = 1}^d \partial^2_{ii}}_{\triangle} - m^2 
\end{equation}
where as indicate we will be working on the space-time $\Real^{1,d} \cong \Real \times\Real^d\ni (t,x)$. 
The homogeneous Klein-Gordon equation with mass parameter $m$ is the equation
\begin{equation}\label{eq:kg-m}
	\Box_m \phi = 0.
\end{equation}
For convenience we will typically prescribe our initial data at $t = 2$:
\begin{subequations}\label{eq:kg-id}
	\begin{align}
		\phi(2,x) &= f(x), \\
		\partial_t(2,x) &= g(x).
	\end{align}
\end{subequations}
In the hyperboloidal method (see \cite{LefMa2015}; also \cite{Wong2017p}) we consider the level-sets $\Sigma_\tau$ of the function
\begin{equation}
	\tau \eqdef \sqrt{t^2 - |x|^2}
\end{equation}
defined on the subset $\set{ (t,x) \in\Real^{1,d}}{t > \abs{x}}$. These level sets are hyperboloids asymptotic to the cone $\{t = |x|\}$. The Minkowski metric on $\Real^{1,d}$ induces Riemannian metrics on $\Sigma_\tau$, and we write $\dvol_\tau$ for the corresponding induced volume form. 
We also define the $\Sigma_\tau$-tangential vector fields $L^i$, where $i$ ranges from $1, \ldots, d$
\begin{equation}
	L^i \eqdef x^i \partial_t + t \partial_{x^i}.
\end{equation}
The fundamental energy identity for the Klein-Gordon equation, adapted to $\Sigma_\tau$ hypersurfaces, read \cite{LefMa2015, Wong2017p}
\begin{lem}\label{lem:energid}
	Let $\phi$ solve \eqref{eq:kg-m} with initial data \eqref{eq:kg-id}, then for every $\tau > 0$ the following weighted energy inequality holds, provided $f\in H^1$ and $g\in L^2$:
	\[
		\mathscr{E}_m(\phi,\tau) \eqdef \int_{\Sigma_\tau} \frac{1}{t\tau} \sum_{i = 1}^d (L^i\phi)^2 + \frac{\tau}{t} (\partial_t\phi)^2 + \frac{t}{\tau} m^2 \phi^2 ~\dvol_{\tau}
		\leq \int_{\Real^d} g^2 + |\nabla f|^2 + m^2 f^2 ~\D{x}.
	\]
	Furthermore, if the supports $\supp(f), \supp(g) \Subset B(0,2)$, then the equality is achieved. 
\end{lem}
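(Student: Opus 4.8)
The plan is to obtain both the inequality and the equality from a single divergence identity for the energy current of the timelike Killing field $\partial_t$. Let $\eta$ denote the Minkowski metric and introduce the stress-energy tensor
\[
	T_{\alpha\beta} \eqdef \partial_\alpha\phi\,\partial_\beta\phi - \tfrac{1}{2}\eta_{\alpha\beta}\bigl(\eta^{\gamma\delta}\partial_\gamma\phi\,\partial_\delta\phi + m^2\phi^2\bigr).
\]
A direct computation using \eqref{eq:kg-m} gives $\nabla^\alpha T_{\alpha\beta} = (\Box_m\phi)\,\partial_\beta\phi = 0$, and since $\partial_t$ is Killing its deformation tensor vanishes; hence the current $J_\alpha \eqdef 2\,T_{\alpha\beta}(\partial_t)^\beta$ satisfies $\nabla^\alpha J_\alpha = 0$.

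First I would identify the two sides of the claim as fluxes of $J$. The future unit normal to $\Sigma_\tau$ is $n = \tau^{-1}\bigl(t\partial_t + x^i\partial_{x^i}\bigr)$, and an algebraic manipulation — expanding $(L^i\phi)^2$ and using $\tau^2 = t^2 - \abs{x}^2$ (so that $\abs{x}^2 + \tau^2 = t^2$ recombines the $(\partial_t\phi)^2$ terms) — shows that the integrand of $\mathscr{E}_m(\phi,\tau)$ equals $J_\alpha n^\alpha = 2\,T_{\alpha\beta}(\partial_t)^\alpha n^\beta$. Therefore $\mathscr{E}_m(\phi,\tau) = \int_{\Sigma_\tau} J_\alpha n^\alpha\,\dvol_\tau$. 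On the slice $\{t=2\}$ the future unit normal is $\partial_t$ and the induced volume is $\D{x}$, while $J_\alpha(\partial_t)^\alpha = (\partial_t\phi)^2 + \abs{\nabla\phi}^2 + m^2\phi^2$ restricts to $g^2 + \abs{\nabla f}^2 + m^2 f^2$; so the right-hand side is exactly the flux of $J$ through $\{t=2\}$.

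Next I would apply the divergence theorem to $J$ on the truncated solid cone
\[
	\mathcal{D} \eqdef \set{(t,x)}{\abs{x} < t,\ 2 \le t \le \sqrt{\tau^2 + \abs{x}^2}},
\]
whose boundary consists of the disk $\{t=2,\ \abs{x}<2\}$, the hyperboloid $\Sigma_\tau$, and the piece of the light cone $\mathcal{C} = \{t=\abs{x}\}$ between them. Because $\nabla^\alpha J_\alpha = 0$, the boundary fluxes sum to zero, and after fixing orientations this reads
\[
	\int_{\Sigma_\tau} J_\alpha n^\alpha\,\dvol_\tau = \int_{\abs{x}<2}\bigl(g^2 + \abs{\nabla f}^2 + m^2 f^2\bigr)\,\D{x} - \Phi_{\mathcal{C}},
\]
where $\Phi_{\mathcal{C}}$ is the outgoing flux of $J$ through $\mathcal{C}$. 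The key point is that $\mathcal{C}$ is null: since $\partial_t$ and the outgoing null generator $\ell$ of $\mathcal{C}$ are both future-directed and causal, the dominant energy condition gives $2\,T_{\alpha\beta}(\partial_t)^\alpha\ell^\beta \ge 0$, so $\Phi_{\mathcal{C}} \ge 0$. Combined with the trivial bound $\int_{\abs{x}<2}(\cdots)\,\D{x} \le \int_{\Real^d}(\cdots)\,\D{x}$ coming from nonnegativity of the integrand, this yields the stated inequality.

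For the equality, if $\supp(f),\supp(g)\Subset B(0,2)$ then finite speed of propagation forces $\phi\equiv 0$ on $\{\abs{x}>t\}$; by continuity of $\phi$ and its first derivatives both $\phi$ and $\partial\phi$ vanish on $\mathcal{C}$, so $\Phi_{\mathcal{C}}=0$, while the data integral over $\{\abs{x}<2\}$ already equals the full integral over $\Real^d$. Hence both inequalities above are saturated. The step I expect to demand the most care is the divergence theorem across the null boundary $\mathcal{C}$ together with the sign bookkeeping for $\Phi_{\mathcal{C}}$: I would make this rigorous either by replacing $\mathcal{C}$ with a nearby spacelike approximant and passing to a limit, or by truncating to $\abs{x}\le R$ and letting $R\to\infty$ using the a priori finiteness of the energy. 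A minor separate point is the regime $\tau<2$, where $\Sigma_\tau$ meets $\{t=2\}$ inside the cone; there the same current identity applies to the region enclosed by the two hypersurfaces, with the null-cone term treated identically.
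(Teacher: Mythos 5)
The paper itself only cites \cite{LefMa2015, Wong2017p} for this lemma, so I am assessing your argument on its own terms. Your framework is the right one (the stress--energy tensor contracted with $\partial_t$, the algebraic identification of the integrand of $\mathscr{E}_m(\phi,\tau)$ with $2T(n,\partial_t)$, and the divergence theorem), and your verification that the $\Sigma_\tau$-flux of $J$ is exactly $\mathscr{E}_m(\phi,\tau)$ is correct. The equality case for data supported in $B(0,2)$ also goes through, since there the solution vanishes near the cone and the only remaining issue is the limiting argument at the ``corner at infinity,'' which you flag. But there is a genuine sign error in the main inequality: for your region $\mathcal{D}$, the cone $\mathcal{C}=\{t=\abs{x}\}$ is an \emph{inflow} (past) boundary, not an outflow one. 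The causal future of a point of $\mathcal{C}$ points into $\{t>\abs{x}\}$, i.e.\ into $\mathcal{D}$, so the correctly oriented balance law reads
\begin{equation*}
	\int_{\Sigma_\tau} J_\alpha n^\alpha \,\mathrm{dvol}_\tau \;=\; \int_{\abs{x}<2}\bigl(g^2+\abs{\nabla f}^2+m^2f^2\bigr)\,dx \;+\; \Phi_{\mathcal{C}}, \qquad \Phi_{\mathcal{C}}\ge 0,
\end{equation*}
the dominant energy condition giving you a \emph{lower} bound on $\mathscr{E}_m(\phi,\tau)$ rather than the upper bound you claim. This is forced by conservation of the total energy: the classical exterior (domain-of-dependence) estimate says energy can only leave $\{\abs{x}>t\}$ through the cone, hence it can only \emph{enter} the interior region. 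Concretely, your chain $\mathscr{E}_m(\phi,\tau)\le\int_{\abs{x}<2}(\cdots)\le\int_{\Real^d}(\cdots)$ fails at the first step: take nontrivial data supported in $\{3<\abs{x}<4\}$; then $\int_{\abs{x}<2}(\cdots)=0$, yet the solution enters the forward light cone at later times and $\mathscr{E}_m(\phi,\tau)>0$ for large $\tau$.

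To repair the proof you must choose a region whose \emph{future} boundary absorbs the excess, so that all of the data on $\{t=2\}$, not just the part in $B(0,2)$, appears on the right. One standard choice: for data supported in $B(0,R)$, apply the divergence theorem on $\set{(t,x)}{2\le t\le T,\ t\le\sqrt{\tau^2+\abs{x}^2},\ \abs{x}\le R+t-2}$. Its boundary consists of the initial disk, the support cone (zero flux), $\Sigma_\tau\cap\{t\le T\}$, and the annulus $\{t=T,\ \sqrt{T^2-\tau^2}\le\abs{x}\le R+T-2\}$; the last is spacelike and future-directed, so its flux is $\ge 0$ and can be discarded, yielding $\mathscr{E}_m(\phi,\tau)\le\int_{\Real^d}(g^2+\abs{\nabla f}^2+m^2f^2)\,dx$ upon letting $T\to\infty$; the general $H^1\times L^2$ case then follows by density. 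For the equality case one additionally checks that the flux through that thinning annulus tends to zero when the data is supported in $B(0,2)$, which is where finite speed of propagation actually earns its keep.
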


The basic point-wise estimate is the following global Sobolev inequality \cite{Wong2017p}:
\begin{prop}\label{prop:globsob}
	Given any $\ell\in \Real$, there exists a constant $C$ depending only on the dimension $d$ and the number $\ell$, such that for any $\tau > 0$ 
	\[ \norm[L^\infty(\Sigma_\tau)]{ \tau^{1-\ell} t^{d + \ell - 1} \psi^2} \leq C \sum_{k= 0}^{\lfloor \frac{d}2\rfloor + 1} \sum_{i_1, \ldots ,i_k = 1}^d \int_{\Sigma\tau} \frac{t^\ell}{\tau^\ell} \abs{L^{i_1} L^{i_2}\cdots L^{i_k} \psi}^2 ~\dvol_\tau.\]
\end{prop}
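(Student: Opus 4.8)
\section*{Proof proposal}

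The plan is to remove the $\tau$-dependence by scaling, identify $\Sigma_1$ with hyperbolic space, and then prove a weighted Sobolev embedding there by treating separately a neighbourhood of the vertex, where the geometry is essentially flat, and the asymptotic region, where the exponential volume growth of $\Sigma_1$ is exactly what manufactures the large power $t^{d+\ell-1}$ on the left. First I would exploit the scaling map $(t,x)\mapsto(t/\tau,x/\tau)$, which carries $\Sigma_\tau$ to $\Sigma_1$. One checks that the $L^i$ are invariant, that $\dvol_\tau = \tau^d\,\dvol_1$, and that on $\Sigma_\tau$ we have $\tau$ constant and $t = \tau\tilde t$. Substituting, the left weight becomes $\tau^{1-\ell}(\tau\tilde t)^{d+\ell-1} = \tau^d\,\tilde t^{\,d+\ell-1}$ and the right weight $t^\ell/\tau^\ell$ becomes $\tilde t^{\,\ell}$, so a common factor $\tau^d$ cancels and it suffices to prove the inequality on $\Sigma_1$, where $t = \cosh\rho \geq 1$ with $\rho$ the geodesic distance to the vertex $(1,0)$.

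Next, the geometry. In geodesic polar coordinates $x = \sinh\rho\,\omega$, $t = \cosh\rho$, the induced metric on $\Sigma_1$ is $d\rho^2 + \sinh^2\rho\, g_{S^{d-1}}$ with $\dvol_1 = (\sinh\rho)^{d-1}\,d\rho\,d\omega$. A direct computation gives $\langle L^i, L^j\rangle = t^2\delta^{ij} - x^ix^j$, with eigenvalues $1$ radially and $t^2$ transversally; equivalently $\omega\cdot L = \partial_\rho$ and the rotation fields $\Omega_{ij} = t^{-1}(x^iL^j - x^jL^i)$ are the standard generators of $SO(d)$ on $S^{d-1}$. Since $\abs{x}\leq t$, the coefficients $x^i/t$ and $\omega^i$ and all of their derivatives along $\partial_\rho$ and the $\Omega_{ij}$ stay bounded uniformly in $\rho$; hence every $k$-fold product of unit radial and unit angular derivatives is pointwise bounded by $C\sum_{j\leq k}\abs{L^{i_1}\cdots L^{i_j}\psi}$ with $C$ independent of $\rho$. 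On $\Sigma_1$ one even has the clean identity $L^i\psi = \sqrt{1+\abs{x}^2}\,\partial_{x^i}\psi = t\,\partial_{x^i}\psi$.

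I would then split by a cutoff in $\rho$. On the inner region $\{\rho\leq 2\}$ the parametrisation $x\mapsto(\sqrt{1+\abs{x}^2},x)$ identifies it with a Euclidean ball on which the metric is uniformly elliptic, $t\in[1,\cosh 2]$, and $\partial_{x^i} = t^{-1}L^i$; the standard embedding $H^{\lfloor d/2\rfloor+1}\hookrightarrow L^\infty$ on the ball, together with $\dvol_1 \approx d^dx$ and the harmless weights, gives the bound there. On the outer region $\{\rho\geq 1\}$ I would set $v = t^{(d+\ell-1)/2}\psi$, times a cutoff equal to $1$ for $\rho\geq 2$, and regard it as a function on the flat cylinder $\Real\times S^{d-1}$, which is $d$-dimensional. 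Because this cylinder has bounded geometry with compact cross-section, $H^{\lfloor d/2\rfloor+1}(\Real\times S^{d-1})\hookrightarrow L^\infty$ holds with a constant depending only on $d$; crucially it mixes radial and angular derivatives freely, so it never spends more than $\lfloor d/2\rfloor+1$ of them. Since $\partial_\rho(t^a)$ is bounded by $\abs{a}\,t^a$ and angular derivatives annihilate $t$, each cylinder derivative of $v$ is bounded by $C\,t^{(d+\ell-1)/2}\sum_j\abs{L^{i_1}\cdots L^{i_j}\psi}$, while $\|v\|_{L^\infty}^2 = \sup_{\Sigma_1} t^{d+\ell-1}\psi^2$ is precisely the quantity to estimate.

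The crux, and the step I expect to be the main obstacle, is the weight bookkeeping in the outer region. Converting the cylinder $L^2$ norm of $v$ back into the stated right-hand side requires comparing $t^{d+\ell-1}\,d\rho\,d\omega$ with $t^\ell(\sinh\rho)^{d-1}\,d\rho\,d\omega$, i.e.\ $t^{d-1}$ with $(\sinh\rho)^{d-1}$; this holds exactly for $\rho\geq 1$, where $\coth\rho$ is bounded, and fails as $\rho\to 0$, which is the structural reason the inner region must be handled by flat Sobolev instead. In effect the large power $t^{d+\ell-1}$ on the left is produced from the modest weight $t^\ell$ on the right together with the factor $t^{d-1}\sim(\sinh\rho)^{d-1}$ coming from the exponential volume growth of $\Sigma_1$. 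Routing the argument through the cylinder substitution, rather than through a cruder angular Sobolev estimate followed by a radial fundamental-theorem-of-calculus step, is what keeps the derivative count at $\lfloor d/2\rfloor+1$ even in odd dimensions, where the latter approach would cost one derivative too many.
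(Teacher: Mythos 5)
The paper does not actually prove Proposition~\ref{prop:globsob}: it is quoted from the reference [Wong2017p] and used as a black box. So there is no internal proof to compare against; judged on its own terms, your argument is correct and complete. The scaling reduction is right ($L^i$ commute with the dilation, $\dvol_\tau = \tau^d \dvol_1$, and both weights produce the same factor $\tau^d$, which is exactly why the constant is $\tau$-independent); the identification of $\omega\cdot L$ with $\partial_\rho$ and of $t^{-1}(x^iL^j - x^jL^i)$ with the rotations is correct; and the two-region split is genuinely needed, since the comparison $t^{d-1}\sim(\sinh\rho)^{d-1}$ that converts the cylinder measure $d\rho\,d\omega$ into $\dvol_1$ fails only as $\rho\to 0$, precisely where your flat chart takes over. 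One point worth stating more carefully: the blanket claim that unit angular derivatives are controlled by the $L^i$ ``uniformly in $\rho$'' is false near the vertex (the angular frame degenerates there), but your proof never uses it there, so nothing breaks. The standard proofs in the hyperboloidal-foliation literature (H\"ormander, LeFloch--Ma, and the cited note) instead cover $\Sigma_\tau$ by images of unit balls under point-dependent dilations $y\mapsto x_0 + t_0 y$ and apply the flat Sobolev inequality on each, the factor $t^d$ on the left arising as the Jacobian of the dilation and the substitution $t\partial_{x^i} = L^i - x^i\partial_t$ (tangentially, $t\partial_{x^i} = L^i$) converting coordinate derivatives into boosts. Your conformal-to-cylinder route buys a more transparent global picture --- the weight $t^{d+\ell-1}$ is visibly manufactured from $t^\ell$ times the exponential volume growth --- and, as you note, keeps the derivative count at $\lfloor d/2\rfloor+1$ by mixing radial and angular derivatives in a single embedding; the localized-ball approach buys uniformity statements with less global geometry and generalizes more readily to $L^1$-based or mixed-norm versions. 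Either is acceptable as a proof of the stated inequality.
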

Note that the constant in the above proposition is independent of $\tau$.
The right hand side of the global Sobolev inequality can be controlled by the energies $\mathscr{E}_m$ of higher order derivatives, and immediately we have
\begin{multline}\label{eq:entoinfty}
	m^2 \norm[L^\infty(\Sigma_\tau)]{t^d \phi^2} + \norm[L^\infty(\Sigma_\tau)]{\tau^2 t^{d-2} (\partial_t\phi)^2}  + \sum_{i = 1}^d \norm[L^\infty(\Sigma_\tau)]{t^{d-2} (L^i\phi)^2} \\
	\leq C \sum_{k= 0}^{\lfloor \frac{d}2\rfloor + 1} \sum_{i_1, \ldots ,i_k = 1}^d \mathscr{E}_m(L^{i_1}\cdots L^{i_k} \phi,\tau). 
\end{multline}
Combining Lemma \ref{lem:energid} and \eqref{eq:entoinfty} we have the following decay estimate for spatially localized initial data.
\begin{prop}\label{prop:decayestimates}
	Fix $m_0 > 0$. There exists a constant $C$ depending only on the dimension $d$ and $m_0$ such that
	whenever $\phi$ solves \eqref{eq:kg-m} for $m \in (0,m_0]$ with initial data \eqref{eq:kg-id} satisfying $f,g\in C^\infty_0(B(0,1))$, the following decay estimates hold on the region $\{t \geq 2\}$
	\[ m^2 t^{d} |\phi|^2 + t^{d-1} |\partial_t\phi|^2 + t^{d-1} \abs{\nabla \phi}^2 \lesssim \norm[H^{\lfloor \frac{d}2\rfloor+2}]{f}^2 + \norm[H^{\lfloor \frac{d}2\rfloor + 1}]{g}^2.\]
\end{prop}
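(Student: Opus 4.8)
The plan is to combine the two prior results—Lemma~\ref{lem:energid} (the energy inequality) and the pointwise bound \eqref{eq:entoinfty}—applied not just to $\phi$ but to all its $L^i$-derivatives up to order $\lfloor d/2\rfloor + 1$. The reason this works is that the operators $L^i$ commute with $\Box_m$ (indeed $[L^i, \Box_m] = 0$ since the $L^i$ are boosts, which are Killing fields of the Minkowski metric and annihilate the constant $m^2$), so each $L^{i_1}\cdots L^{i_k}\phi$ is again a solution to the homogeneous Klein-Gordon equation \eqref{eq:kg-m}. Thus the energy $\mathscr{E}_m(L^{i_1}\cdots L^{i_k}\phi, \tau)$ appearing on the right of \eqref{eq:entoinfty} is controlled, via Lemma~\ref{lem:energid}, by the initial data norm for the derived solution.

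Let me sketch what I'd expect the main obstacle to be.

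Let me work through the steps. First I would fix a multi-index $(i_1,\dots,i_k)$ with $k \le \lfloor d/2\rfloor + 1$ and set $\psi = L^{i_1}\cdots L^{i_k}\phi$. Since $L^i$ commutes with $\Box_m$, $\psi$ solves \eqref{eq:kg-m}, so Lemma~\ref{lem:energid} gives $\mathscr{E}_m(\psi,\tau) \le \int_{\Real^d} (\partial_t\psi)^2 + |\nabla\psi|^2 + m^2\psi^2 \,\D{x}$ evaluated at $t=2$. Summing over all such multi-indices and inserting into \eqref{eq:entoinfty} bounds the left-hand pointwise quantities by $\sum_k \sum \mathscr{E}_m(L^{i_1}\cdots L^{i_k}\phi,\tau)$, hence by the sum of the corresponding data norms at $t=2$. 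The second step is to re-express the $L^i$-derivatives of the data in terms of ordinary Sobolev norms: on the slice $t=2$ one has $L^i = x^i\partial_t + 2\partial_{x^i}$, and since the data is supported in $B(0,1)$, the coefficients $x^i$ are bounded, so each application of $L^i$ costs at most one derivative of $f$ or $g$ (with $\partial_t\phi$ on the initial slice expressible through $g$, and $\partial_t^2\phi = (\triangle - m^2)\phi$ via the equation to trade time-derivatives for the data). This yields $\sum_k \mathscr{E}_m(\cdots) \lesssim \|f\|_{H^{\lfloor d/2\rfloor+2}}^2 + \|g\|_{H^{\lfloor d/2\rfloor+1}}^2$, with the extra derivative on $f$ coming from the $|\nabla f|^2$ term in the energy.

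The third step is to convert the $\Sigma_\tau$-pointwise bounds into the stated bounds on constant-$t$ surfaces over $\{t\ge 2\}$. The left side of \eqref{eq:entoinfty} controls $m^2 t^d|\phi|^2$, $\tau^2 t^{d-2}|\partial_t\phi|^2$, and $t^{d-2}|L^i\phi|^2$ in $L^\infty(\Sigma_\tau)$. Inside the cone, away from the light-cone boundary, $\tau \sim t$, so $\tau^2 t^{d-2} \sim t^d \ge t^{d-1}$, recovering the $t^{d-1}|\partial_t\phi|^2$ claim; and $\nabla\phi$ must be reconstructed from the $L^i\phi$ and $\partial_t\phi$ via the algebraic relation $t\,\partial_{x^i}\phi = L^i\phi - x^i\partial_t\phi$, so that $t^2|\nabla\phi|^2 \lesssim \sum_i|L^i\phi|^2 + |x|^2|\partial_t\phi|^2$, giving $t^{d-1}|\nabla\phi|^2$ after dividing by $t$. \textbf{The main obstacle}, and the delicate point, is precisely this passage from the hyperboloidal $\Sigma_\tau$ estimates to uniform decay on the whole region $\{t\ge 2\}$: the weights $\tau$ and $t$ are only comparable in the interior, and near the light-cone $\{t=|x|\}$ the factor $\tau^2/t^2$ degenerates, so one must check that the support and causal-propagation properties of the compactly-supported data keep the solution within a region where $\tau \gtrsim t$—i.e., away from the cone—for the relevant times, or otherwise absorb the weaker decay there. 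Verifying this geometric containment, and confirming that the loss of one power of $t$ (from $t^{d-2}$ to the claimed $t^{d-1}$) is exactly what the $\tau/t$ or $x/t$ factors supply, is where the real work lies; the rest is bookkeeping of derivatives.
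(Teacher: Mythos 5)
Your proposal follows the paper's proof essentially step for step: commute the boosts $L^i$ through $\Box_m$, compute the induced data at $t=2$ (your $L^i = x^i\partial_t + 2\partial_{x^i}$ on the initial slice, with $\partial_t^2\phi$ eliminated via the equation, matches the paper's explicit $f', g'$), bound the energies of up to $\lfloor d/2\rfloor+1$ boosts by $\|f\|_{H^{\lfloor d/2\rfloor+2}}^2 + \|g\|_{H^{\lfloor d/2\rfloor+1}}^2$, feed this into \eqref{eq:entoinfty}, and recover $\nabla\phi$ from $t\partial_{x^i} = L^i - x^i\partial_t$ with $|x^i|\leq t$. All of that is correct and is exactly what the paper does.

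The one step you leave open, however, you propose to resolve in a way that would fail. By finite speed of propagation the solution lives in $D = \{t\geq 2,\ |x|\leq t-1\}$, and this region is \emph{not} contained in a set where $\tau\gtrsim t$: on the boundary $|x| = t-1$ one has $\tau^2 = (t+|x|)(t-|x|) = 2t-1$, so $\tau\sim\sqrt{t}$ and $\tau/t\to 0$. So "the solution stays away from the cone where $\tau\sim t$" is false, and the heuristic $\tau^2 t^{d-2}\sim t^d$ does not hold uniformly on the support. What \emph{does} hold on all of $D$ is the weaker bound $\tau^2 = (t+|x|)(t-|x|)\geq t+|x|\geq t$, using only $t-|x|\geq 1$. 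This single square-root gain is precisely the one power of $t$ needed to convert $\tau^2 t^{d-2}|\partial_t\phi|^2$ into $t^{d-1}|\partial_t\phi|^2$ — and it is exactly why the proposition asserts $t^{d-1}$ decay for $|\partial_t\phi|^2$ and $|\nabla\phi|^2$ rather than the $t^d$ your interior heuristic would suggest. With that observation substituted for your proposed containment, the rest of your argument closes as written.
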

\begin{proof}
	To control the right hand side of Proposition \ref{prop:globsob} we need to consider the equation satisfied by $L^{i_1} \cdots L^{i_k} \phi$.
	For convenience in the course of the argument we will use $A \lesssim B$ to denote that $A$ is bounded by $B$ up to a universal factor that depends only on $m_0$ and $d$. 
	The Lorentz boosts $L^i$ are well-known to commute with the operator $\Box_m$, hence we can easily check that if $\phi$ solves \eqref{eq:kg-m} with initial data \eqref{eq:kg-id}, then $L^i\phi$ also solves \eqref{eq:kg-m} this time with initial data
	\begin{align*}
		L^i\phi(2,x) &= f'(x) \eqdef 2 \partial_{x^i} f(x) + x^i g(x), \\
		\partial_t L^i\phi(2,x) &= g'(x) \eqdef \partial_{x^i} f(x) + 2 \partial_{x^i} g(x) + x^i \triangle f(x) - x^i m^2 f(x). 
	\end{align*}
	In particular, since $f,g\in C^\infty_0(B(0,1))$, and $m \leq m_0$, 
	\begin{align*}
		 \norm[L^2]{\nabla f'} & \lesssim \norm[L^2]{\nabla^2 f} + \norm[H^1]{g}, \\
		 \norm[L^2]{f'} & \lesssim \norm[L^2]{\nabla f} + \norm[L^2]{g}, \\
		 \norm[L^2]{g'} & \lesssim \norm[H^2]{f} + \norm[L^2]{\nabla g}.
	\end{align*}
	Hence for $k \leq \lfloor \frac{d}{2} \rfloor + 1$, correspondingly the conserved energy 
	\[ \mathscr{E}_m(L^{i_1}\cdots L^{i_k} \phi,\tau) \lesssim \norm[H^{k+1}]{f}^2 + \norm[H^k]{g}^2.\]
	And by \eqref{eq:entoinfty} we obtain
	\begin{multline*}
	m^2 \norm[L^\infty(\Sigma_\tau)]{t^d \phi^2} + \norm[L^\infty(\Sigma_\tau)]{\tau^2 t^{d-2} (\partial_t\phi)^2}  + \sum_{i = 1}^d \norm[L^\infty(\Sigma_\tau)]{t^{d-2} (L^i\phi)^2} \\
		\lesssim \norm[H^{\lfloor \frac{d}{2}\rfloor + 2}]{f}^2 + \norm[H^{\lfloor\frac{d}{2}\rfloor + 1}]{g}^2. 
	\end{multline*}
	
	Next, denote by $D = \{ t \geq 2, |x| \leq t - 1\}$. By finite speed of propagation, the support of the solution $\phi$ when $t \geq 2$ is contained in $D$. We note that $D$ is covered by the $\cup_{\tau \geq \sqrt{3}} \Sigma_\tau$.
	Furthermore, on $D$ we have that 
	\[ \tau^2 = (t+|x|)(t-|x|) \geq t+|x| \geq t.\]
	Additionally, we have
	\[ t\partial_{x^i} = L^i - x^i \partial_t \]
	and $x^i \leq t$ on $D$, meaning that 
	\[ t^{d-1} |\partial_{x^i}\phi|^2 \leq 2 t^{d-3} |L^i \phi|^2 + t^{d-1} |\partial_t \phi|^2 \]
	and proving the theorem. 
\end{proof}

\section{Dispersive estimate}

For $k \geq 0$ denote by $P_k$ the Littlewood-Paley projector on $\Real^d$ to frequency $\approx 2^{k}$. By $P_{-1}$ we denote the low-frequency projector to frequencies $\lessapprox 1$, such that $u = \sum_{j = -1}^\infty P_j u$.

Let $B_i$ be an enumeration of the balls of radius 1 centered at the lattice points $(\frac{1}{\sqrt{d}} \Integer)^d$. The family $\{B_i\}$ is an open cover of $\Real^d$, and every point is contained in no more than $(16 d)^{d/2}$ balls. 
Define $\chi_i$ to be a partition of unity subordinate to $\{B_i\}$; we can require the first $d + 2$ derivatives of the family $\chi_i$ to be uniformly bounded. 
Under these conditions, we have that, for any function $u \in W^{k,1}(\Real^d)$ with $k \leq d+2$, that
\begin{equation}\label{eq:l1comp}
	\norm[W^{k,1}]{u} \leq \sum_i \norm[W^{k,1}]{\chi_i u} \lesssim \sum_i \norm[W^{k,1}(B_i)]{u} \lesssim \norm[W^{k,1}]{u}
\end{equation}
with the implicit constants depending on the dimension $d$ and the uniform bound on the family $\chi_i$. 

Consider $m_0$ a constant fixed once and for all for the remainder of this section. We will now describe the dispersive estimates satisfied by solutions to $\Box_{m_0} \phi = 0$ with initial condition \eqref{eq:kg-id}. 
To do so, we will first decompose the initial data as
\[ f = \sum_{j = -1}^\infty P_j f, \quad g = \sum_{j = -1}^\infty P_j g \]
and observe that we can write 
\[ \phi = \sum_{j = -1}^\infty \phi_j \]
and $\phi_j$ solves $\Box_{m_0} \phi_j = 0$ with initial data $P_jf, P_j g$ prescribed on $\{t = 2\}$. 

For convenience we will write
\[ s_d = \lfloor \frac{d}{2} \rfloor + 1.\]

\subsection{Low frequency estimate for $\phi_{-1}$}

Let $\psi_k$ solve $\Box_{m_0} \psi_k = 0$ with initial data $(\chi_k P_{-1} f, \chi_k P_{-1} g)$.
By linearity we have $\phi_{-1} = \sum_k \psi_k$. 
Proposition \eqref{prop:decayestimates} guarantees that 
\[ (m_0)^2 t^d |\psi_k|^2 + t^{d-1} |\partial \psi_k|^2 \lesssim \norm[H^{s_d + 1}]{\chi_k P_{-1} f}^2 + \norm[H^{s_d}]{\chi_k P_{-1}g}^2. \]
Sobolev embedding $W^{\frac{d}{2},1} \subset L^2$ implies
\[ (m_0)^2 t^d |\psi_k|^2 + t^{d-1} |\partial \psi_k|^2 \lesssim \norm[W^{s_d + \frac{d}{2} + 1,1}]{\chi_k P_{-1} f}^2 + \norm[W^{s_d+ \frac{d}2,1}]{\chi_k P_{-1}g}^2. \]
Taking square roots and summing over $k$ and using the estimate \eqref{eq:l1comp} we get the point-wise estimate
\[
	m_0 t^{d/2} \abs{\phi_{-1}} + t^{(d-1)/2} \abs{\partial\phi_{-1}} \lesssim \norm[W^{s_d + \frac{d}{2} + 1,1}]{P_{-1} f} + \norm[W^{s_d + \frac{d}{2},1}]{P_{-1 g}}
\]
and finally using the fact we have the frequency bound due to the projector $P_{-1}$, we conclude
\begin{equation}
	m_0 t^{d/2} \abs{\phi_{-1}} + t^{(d-1)/2} \abs{\partial\phi_{-1}} \lesssim \norm[L^1]{P_{-1} f} + \norm[L^1]{P_{-1} g}. 
\end{equation}

\subsection{High frequency estimates for $\phi_{k}$, $k \geq 0$}

Given that $\phi_k$ solves $\Box_{m_0} \phi_k = 0$, with initial data $(P_k f, P_k g)$ prescribed at $t = 2$, let us consider the function
\begin{equation}
	\tilde{\phi}_k(t,x) \eqdef \phi_k( 2 + \frac{t-2}{2^k}, \frac{x}{2^k} ). 
\end{equation}
Observe that $\tilde{\phi}_k$ solves $\Box_{2^{-k} m_0} \tilde{\phi}_k = 0$, with initial data $(\tilde{f}_k, \tilde{g}_k)$ prescribed at $t = 2$ given by 
\[ \tilde{f}_k(x) = (P_k f)(2^{-k} x) , \quad \tilde{g}_k(x) = 2^{-k} (P_k g)(2^{-k},x). \]
In particular $\tilde{f}_k(x)$ and $\tilde{g}_k(x)$ have frequency support $\approx 1$. 

Let $\psi_{k,j}$ be the solution to $\Box_{2^{-k} m_0} \psi_{k,j} = 0$, with initial data $(\chi_j \tilde{f}_k, \chi_j \tilde{g}_k)$. Since $k \geq 0$ we can apply Proposition \ref{prop:decayestimates} to obtain estimates on $\psi_{k,j}$, noting that the mass parameter is now $2^{-k}m_0 \leq m_0$. Then summing in analogous fashion to the previous subsubsection, we arrive at 
\begin{equation}
	2^{-k} m_0 t^{d/2} |\tilde{\phi}_k| + t^{(d-1)/2} |\partial \tilde{\phi}_k| \lesssim \norm[L^1]{\tilde{f}_k} + \norm[L^1]{\tilde{g}_k}. 
\end{equation}
We can expand the left hand side and perform a change of variable $s = 2 + \frac{t-2}{2^k}$ to obtain
\begin{multline*}
	2^{-k} m_0 2^{kd/2} (s-2)^{d/2} |\phi_k(s,y)| + 2^{-k} 2^{k(d-1)/2} (s-2)^{(d-1)/2} |\partial\phi_k(s,y)|\\
	\lesssim \norm[L^1]{\tilde{f}_k} + \norm[L^1]{\tilde{g}_k}. 
\end{multline*}
Finally reversing the change of variables for $\tilde{f}_k$ and $\tilde{g}_k$ we obtain that, for $t > 2$, that
\begin{align}
	m_0(t-2)^{d/2} |\phi_k| &\lesssim 2^{kd/2 + k} \norm[L^1]{P_k f} + 2^{kd/2} \norm[L^1]{P_k g}\\
	(t-2)^{(d-1)/2} |\partial \phi_k| & \lesssim 2^{k(d-1)/2 + 2k} \norm[L^1]{P_k f} + 2^{k(d-1)/2 + k} \norm[L^1]{P_k g}.
\end{align}

\section{Discussion}

First, gathering up the estimates in the previous section we obtain the following dispersive estimate. Here we place the initial data at $t = 0$ to make it more comparable to the standard presentations. 

\begin{thm}[Klein-Gordon dispersive estimate]
	Let $m_0 \in [0,M]$ be fixed, and let $d\in \Integer_+$. Consider the initial value problem 
	\begin{equation}\label{eq:ivpstmt}
		\begin{cases}
			\Box_{m_0} \phi = 0 \\
			\phi(0,x) = f(x) \\
			\partial_t\phi(0,x) = g(x)
		\end{cases}
	\end{equation}
	Then there exists a universal constant $C$ depending only on $M$ and $d$, such that the following dispersive estimates hold:
	\begin{subequations}
		\begin{align}
			m_0 |P_{-1} \phi| & \leq \frac{C}{(1+t)^{d/2}} (\norm[L^1]{P_{-1} f} + \norm[L^1]{P_{-1} g}) \label{eq:lowfreq}\\
			m_0 |P_{k}\phi|_{k \geq 0} & \leq \frac{C\cdot 2^{kd/2}}{t^{d/2}} ( 2^k \norm[L^1]{P_{k} f} + \norm[L^1]{P_k g} ) \label{eq:highfreq}\\
			|\partial P_k\phi|_{k \geq 0} & \leq \frac{C \cdot 2^{k(d-1)/2}}{t^{(d-1)/2}} (2^{2k} \norm[L^1]{P_k f} + 2^k \norm[L^1]{P_k g}). \label{eq:wavedecay}
		\end{align}
	\end{subequations}
\end{thm}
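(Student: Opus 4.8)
The plan is to recognize that the theorem merely consolidates, at $t=0$, the per-frequency estimates already established block by block in the preceding section, so that the work splits into three routine reductions and one point of genuine care. I would record first that, because $\Box_{m_0}$ has constant coefficients, every projector $P_j$ commutes with the evolution; the solution launched from $(P_j f, P_j g)$ is therefore exactly $P_j\phi$, and $\phi=\sum_{j\geq -1}P_j\phi$. This identifies the objects $\phi_{-1}$ and $\phi_k$ of the previous section with $P_{-1}\phi$ and $P_k\phi$, so the low- and high-frequency bounds derived there are already bounds on the blocks named in the theorem.

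Second, I would exploit that $\Box_{m_0}$ is autonomous: if $\phi$ solves \eqref{eq:ivpstmt} with data at $t=0$, then $\Phi(t,x)\eqdef\phi(t-2,x)$ solves the same equation with that data imposed at $t=2$ in the form \eqref{eq:kg-id}. Applying the previous section's output to $\Phi$ and then substituting $t\mapsto t+2$ reduces everything to decay bookkeeping. For the low-frequency block the decay was measured in ambient time, $s^{d/2}$ for $s\geq 2$; the shift turns this into $(t+2)^{d/2}\gtrsim(1+t)^{d/2}$ on $t\geq 0$, giving \eqref{eq:lowfreq}. For the high-frequency blocks the rescaling produced factors $(s-2)^{d/2}$ and $(s-2)^{(d-1)/2}$ for $s>2$, which the shift converts into the clean $t^{d/2}$ and $t^{(d-1)/2}$ of \eqref{eq:highfreq} and \eqref{eq:wavedecay}; the resulting singularity as $t\to 0$ is harmless because these are upper bounds.

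The one place demanding care — and the conceptual heart flagged already in the introduction — is uniformity of the constant as the mass degenerates. The high-frequency step rescales $P_k\phi$ into a solution of $\Box_{2^{-k}m_0}\tilde\phi_k=0$, so the effective mass $2^{-k}m_0\to 0$ as $k\to\infty$; for a single constant to serve all $k$ one needs the decay supplied by Lemma \ref{lem:energid} and Propositions \ref{prop:globsob}, \ref{prop:decayestimates} not to deteriorate in the vanishing-mass limit. This is precisely why Proposition \ref{prop:decayestimates} is stated with a constant depending only on $d$ and an \emph{upper} bound for the mass; I would invoke it with that bound equal to $M$, so that the theorem's $C$ depends only on $M$ and $d$. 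The degenerate endpoint $m_0=0$ is then disposed of separately: \eqref{eq:lowfreq} and \eqref{eq:highfreq} carry a prefactor $m_0$ and hold trivially, while \eqref{eq:wavedecay} survives because the mass enters $\mathscr{E}_m$ only through the nonnegative term $\frac{t}{\tau}m^2\phi^2$, which may simply be dropped, and Proposition \ref{prop:globsob} is mass-independent to begin with.

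I expect no deep obstacle in the present statement itself: once the vanishing-mass uniformity has been isolated and absorbed into Proposition \ref{prop:decayestimates}, the remaining content is the commutation of $P_j$ with the flow, the time translation, and the careful tracking of whether the decay is anchored at the origin (producing $1+t$ in the low-frequency regime) or at the shifted data slice (producing a bare $t$ in the high-frequency regime).
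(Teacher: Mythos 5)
Your proposal is correct and follows essentially the same route as the paper, which presents the theorem as simply ``gathering up'' the per-frequency estimates of the preceding section after relocating the data to $t=0$. You in fact supply details the paper leaves implicit --- the commutation of $P_j$ with the flow identifying $\phi_j$ with $P_j\phi$, the time-translation bookkeeping producing $(1+t)^{d/2}$ versus $t^{d/2}$, and the uniformity of constants and the $m_0=0$ endpoint --- all of which are handled correctly.
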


\begin{rmk}
	\begin{enumerate}
		\item The theorem makes clear the heuristic that ``high frequency parts behave like solutions to the wave equation''. Indeed, the decay estimate \eqref{eq:wavedecay} is the only one that will survive in the vanishing mass limit $m_0 \to 0$. 
		\item The latter two estimates can be combined. By replacing $P_k f \mapsto \partial \triangle^{-1} P_k f$, we see that \eqref{eq:wavedecay} implies 
			\[ |P_k \phi| \leq \frac{C \cdot 2^{k(d-1)/2}}{t^{(d-1)/2}} (2^k \norm[L^1]{P_k f} + \norm[L^1]{P_k g}).\]
			Indeed, interpolating between the two we find in fact that, for any $s \in [\frac{d-1}{2}, \frac{d}2]$, 
			\[ |P_k \phi| \leq \frac{C \cdot 2^{ks}}{t^{s}} (2^k \norm[L^1]{P_k f} + \norm[L^1]{P_k g}), \]
			showing the trade off between regularity and decay for the Klein-Gordon equation. 
	\end{enumerate}
\end{rmk}

\printbibliography
\end{document}